\definecolor{cite}{HTML}{11871E}
\definecolor{url}{HTML}{698996}
\definecolor{link}{HTML}{912F1B}
\tikzstyle{arrow} = [-{Straight Barb[scale=0.8]}, line width=0.2mm]
\tikzset{
	math to/.tip={Glyph[glyph math command=rightarrow]},
	loop/.tip={Glyph[glyph math command=looparrowleft, swap]},
}
\newcommand{\myuline}[1]{%
	\uline{\phantom{#1}}%
	\llap{\contour{white}{#1}}%
}
\newcommand*{\saved@myuline}{}
\let\saved@myuline\myuline
\newcommand*{\mathuline}{%
	\mathpalette{\math@myuline\saved@myuline}%
}
\newcommand*{\math@myuline}[3]{%
	\mbox{#1{$#2#3\m@th$}}%
}
\renewcommand*{\myuline}{%
	\relax  
	\ifmmode
	\expandafter\mathuline
	\else
	\expandafter\saved@myuline
	\fi
}
\DeclareFontFamily{T1}{cbgreek}{}
\DeclareFontShape{T1}{cbgreek}{m}{n}{<-6>  grmn0500 <6-7> grmn0600 <7-8> grmn0700 <8-9> grmn0800 <9-10> grmn0900 <10-12> grmn1000 <12-17> grmn1200 <17-> grmn1728}{}
\DeclareSymbolFont{quadratics}{T1}{cbgreek}{m}{n}
\DeclareMathSymbol{\qoppa}{\mathord}{quadratics}{21}
\Crefname{prop}{Proposition}{Propositions}
\Crefname{lem}{Lemma}{Lemmas}
\Crefname{cor}{Corollary}{Corollaries}
\Crefname{thm}{Theorem}{Theorems}
\Crefname{alphThm}{Theorem}{Theorems}
\Crefname{defn}{Definition}{Definitions}
\Crefname{notation}{Notation}{Notations}
\Crefname{cons}{Construction}{Constructions}
\Crefname{rmk}{Remark}{Remarks}
\Crefname{obs}{Observation}{Observations}
\Crefname{trick}{Trick}{Tricks}
\Crefname{warning}{Warning}{Warnings}
\Crefname{conj}{Conjecture}{Conjectures}
\Crefname{assump}{Assumption}{Assumptions}
\Crefname{recollect}{Recollection}{Recollections}
\Crefname{terminology}{Terminology}{Terminologies}
\Crefname{question}{Question}{Questions}
\Crefname{example}{Example}{Examples}
\Crefname{figure}{Figure}{Figures}
\newtheorem{thm}[subsection]{Theorem}
\newtheorem{prop}[subsection]{Proposition}
\newtheorem{lem}[subsection]{Lemma}
\newtheorem{cor}[subsection]{Corollary}
\newtheorem{alphThm}{Theorem}
\newcommand{\neutralize}[1]{\expandafter\let\csname c@#1\endcsname\count@}
\newtheorem*{thm*}{Theorem}
\newtheorem*{prop*}{Proposition}
\newtheorem*{lem*}{Lemma}
\newtheorem*{cor*}{Corollary}
\newtheorem{alphConj}{Conjecture}
\newtheorem{alphCor}{Corrollary}
\newtheorem{alphProp}{Proposition}
\theoremstyle{definition}
\newtheorem*{defn*}{Definition}
\newtheorem{defn}[subsection]{Definition}
\newtheorem{nota}[subsection]{Notation}
\newtheorem{question}[subsection]{Question}
\theoremstyle{remark}
\newtheorem{rmk}[subsection]{Remark}
\newtheorem{obs}[subsection]{Observation}
\newtheorem{constr}[subsection]{Construction}
\newcommand{\family}{\mathcal{F}}
\newcommand{\spc}{\mathrm{An}}
\newcommand{\sC}{{\mathcal C}}
\DeclareMathOperator{\presheaf}{\mathcal{P}}
\newcommand{\orbit}{\mathcal{O}}
\newcommand{\finite}{\mathrm{Fin}}
\DeclareMathOperator{\fib}{\operatorname{fib}}
\newcommand{\hAut}{\mathrm{hAut}}
\newcommand{\Asph}{\mathrm{Asph}}
\newcommand{\Asphinj}{\mathrm{Asph}^{\mathrm{inj}}}
\newcommand{\Asphinjfam}[1]{\mathrm{Asph}^{\mathrm{inj},{#1}}}
\newcommand{\abstrsp}{\mathrm{Trans}}
\newcommand{\trsp}[1]{\mathrm{Trans}_{/B{#1}}}
\newcommand{\Bor}{\mathrm{Bor}}
\newcommand{\univspc}[2]{E_{#1} {#2}}
\newcommand{\mcg}[1]{\mathrm{Mod}({#1})}
\newcommand{\Out}[1]{\mathrm{Out}({#1})}
\newcommand{\Aut}[1]{\mathrm{Aut}({#1})}
\newcommand{\pCH}{\mathrm{pCH}}
\title{Borel actions in nonpositively curved geometry and the Nielsen realisation problem}
\author{ \large\textsc{Christian} KREMER}
\date{\today}
\begin{document}
	\maketitle
	
		\begin{abstract}
		In this note, we record the proof of a theorem about the coincidence of genuine and homotopy fixed points for isometric group actions on complete Riemannian manifolds with nonpositive sectional curvature, and more generally, certain quotients of universal spaces for families. The result is put into context with the Nielsen realisation problem for aspherical manifolds, and we give a unifying account of different formulations of that problem, made possible by the same methods.
	\end{abstract}
	
	\tableofcontents
	
	\section{Introduction}
	
	Let $M$ be a complete Riemannian manifold with nonpositive sectional curvature. Assume the finite group $G$ acts on $M$ via isometries. For each subgroup $H \subset G$, the fixed submanifold $M^H \subset M$ is an embedded submanifold, which in fact inherits a Riemannian metric with nonpositive sectional curvature as well. This submanifold will be referred to as the \textit{genuine $H$-fixed points} of the action. This article is about the following homotopical property of such actions, comparing the genuine fixed points of $M$ to the homotopy fixed points.
	
	\begin{thm}
		\label{thm:actions_non_nonpositively_curved_manifolds_are_borel}
		If $M$ is a complete Riemannian manifold with nonpositive sectional curvature and an isometric action by a finite group $G$, then for each $H \leq G$ the canonical map
		\[ M^H \rightarrow M^{hH} \]
		from the genuine $H$-fixed points of $M$ to the homotopy $H$-fixed points, is an equivalence.
	\end{thm}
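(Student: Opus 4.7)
The plan is to model both sides of the comparison by group-theoretic data attached to the natural extension $1 \to \pi \to \Gamma \to G \to 1$, where $\pi := \pi_1 M$ and $\Gamma$ is the group of all lifts of elements of $G$ to the universal cover $\widetilde M$, and then to match them using Cartan's fixed-point theorem for finite isometry groups of Hadamard manifolds. I first reduce to $H = G$ by restricting the action, and further to $M$ connected by decomposing along $G$-orbits of $\pi_0 M$. By Cartan--Hadamard, $\widetilde M$ is a Hadamard manifold --- complete, simply-connected, CAT(0), hence contractible --- and the $G$-action on $M$ extends canonically to an isometric action of $\Gamma$ on $\widetilde M$.

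\emph{Genuine fixed points.} Any $x \in M^G$ lifts to $\widetilde x \in \widetilde M$, and $G$-fixedness of $x$ translates, via the unique-lift property, into a group-theoretic section $s \colon G \to \Gamma$ of $\Gamma \to G$ with $\widetilde x \in \widetilde M^{s(G)}$; changing the lift of $x$ by $\gamma \in \pi$ conjugates $s$ by $\gamma$. Conversely, for any such section the finite isometry group $s(G) \le \mathrm{Isom}(\widetilde M)$ has nonempty fixed set by Cartan's theorem, and this fixed set is a totally geodesic convex submanifold of $\widetilde M$, hence contractible, on which the centralizer $Z_\pi(s(G))$ acts freely. I therefore obtain
\[ M^G \;\simeq\; \coprod_{[s]} \widetilde M^{s(G)}/Z_\pi(s(G)) \;\simeq\; \coprod_{[s]} B Z_\pi(s(G)), \]
where $[s]$ ranges over $\pi$-conjugacy classes of group-theoretic sections of $\Gamma \to G$.

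\emph{Homotopy fixed points.} Since $M \simeq B\pi$ is aspherical, the Borel construction $M_{hG}$ is aspherical with fundamental group $\Gamma$, so $M^{hG} \simeq \mathrm{Map}_{/BG}(BG, B\Gamma)$ is the classical space of sections of $B\Gamma \to BG$, which is well-known to decompose as $\coprod_{[s]} B Z_\pi(s(G))$ over the same indexing set. The canonical comparison $M^G \to M^{hG}$ sends a fixed point to the section it determines, inducing the tautological bijection on $\pi_0$ and, on each component, the inclusion of the contractible space $\widetilde M^{s(G)}$ into a contractible component of the section space; this is an equivalence. The geometric heart of the argument is Cartan's fixed-point theorem combined with convexity of fixed sets in CAT(0) geometry --- everything else is bookkeeping, and the main obstacle is simply organizing it so that both $\pi_0$-decompositions are visibly indexed by the same set of $\pi$-conjugacy classes of sections.
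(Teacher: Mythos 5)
Your route is genuinely different from the paper's. The paper isolates a purely homotopy-theoretic statement (its \cref{thm:universal_spaces_and_their_borel_quotients}): for the family $\family$ of subgroups of $\Gamma$ injecting into $G$, there is an equivalence of categories $\spc_\Gamma^{\family}\simeq(\spc_G)_{/\Bor(B\pi)}$, whence $\pi\backslash\univspc{\family}{\Gamma}\simeq\Bor(B\pi)$ is a Borel $G$-space for formal reasons; the geometry (Cartan--Hadamard plus Cartan's fixed-point theorem) enters only to identify $\widetilde M$ with $\univspc{\finite}{\Gamma}$. You instead compute both sides by hand: $M^{G}$ via lifts of fixed points to the universal cover, and $M^{hG}$ via the classical description of the section space of $B\Gamma\to BG$, matching both against $\coprod_{[s]}BZ_\pi(s(G))$ indexed by $\pi$-conjugacy classes of splittings. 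Your computations of the two sides are correct (the decomposition $p^{-1}(M^G)=\coprod_s\widetilde M^{s(G)}$, the identification of the gluing group as $Z_\pi(s(G))$ by uniqueness of lifts, and the section-space computation are all standard and sound), and your reductions to $H=G$ and to connected $M$ are fine. What the paper's approach buys is that the comparison map never has to be analysed; what yours buys is an explicit, elementary description of both fixed-point spaces, at the cost of having to identify the comparison map explicitly --- and that is where your write-up has a gap.

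The gap is in the last step. You say the comparison on each component is ``the inclusion of the contractible space $\widetilde M^{s(G)}$ into a contractible component of the section space.'' But the components of the section space $M^{hG}$ are the spaces $BZ_\pi(s(G))$, which are not contractible in general (take $G$ trivial: then $M^{hG}=M=B\pi$). Likewise the component of $M^G$ is $\widetilde M^{s(G)}/Z_\pi(s(G))$, not $\widetilde M^{s(G)}$. Knowing that source and target of a map are both abstractly equivalent to $BZ_\pi(s(G))$ does not make the map an equivalence; you must show it induces an isomorphism on $\pi_1$, and this is precisely the content of the theorem, so it cannot be waved through. Two standard repairs: (a) pass to covers --- the map $\widetilde M^{s(G)}\to\widetilde M^{hs(G)}=\mathrm{Map}^{s(G)}(EG,\widetilde M)$ \emph{is} a map of contractible spaces, it covers the comparison map on the relevant components, and the deck group is $Z_\pi(s(G))$ on both sides (on the target because $\widetilde M\to M$ induces a covering of section spaces); or (b) note that both $\pi_1(\widetilde M^{s(G)}/Z_\pi(s(G)))$ and $\pi_1$ of the corresponding component of $M^{hG}$ inject into $\pi_1(M)=\pi$ with image $Z_\pi(s(G))$ (via the inclusion $M^G\subset M$ and evaluation $M^{hG}\to M$ respectively), and the comparison map commutes with these, hence is an isomorphism on $\pi_1$. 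With either patch your argument is complete.
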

	
	Recall that a $G$-space $X$ is a \textit{Borel $G$-space} if for each $H \leq G$ the map $X^{H} \rightarrow X^{hH}$ is an equivalence. So \cref{thm:actions_non_nonpositively_curved_manifolds_are_borel} predicts that nonpositively curved manifolds with isometric $G$-action are Borel $G$-spaces. 
	In this note, we record a proof of \cref{thm:actions_non_nonpositively_curved_manifolds_are_borel} in terms of genuine equivariant homotopy theory, depending on the Cartan-Hadamard theorem from differential geometry as geometric input. The proof is achieved by translating - using said theorem from geometry - to a result about certain quotients of universal spaces for families. That result is \cref{thm:universal_spaces_and_their_borel_quotients}, and should in fact be viewed as the main mathematical content of this note. We briefly explain \cref{thm:universal_spaces_and_their_borel_quotients} and put it into context with the Nielsen realisation problem after that.
	
	Recall that for a discrete group $\Gamma$, a \textit{family of subgroups} is a collection $\family$ of subgroups closed under conjugation and passing to subgroups. Given such a family $\family$, a \textit{universal $\Gamma$-space for the family $\family$} is a $\Gamma$-space $\univspc{\family}{\Gamma}$ with the property that for each subgroup $H\leq \Gamma$ one has
	\begin{equation}
		(\univspc{\family}{\Gamma})^H \simeq \begin{cases}
			* \colon \text{if $H \in \family$};\\
			\emptyset \colon \text{if $H \notin \family$. }
		\end{cases}
	\end{equation}
	A particularly important example of a family for this note is the family $\finite$ of  \textit{finite subgroups} of a discrete group $\Gamma$, because of the following simple consequence of the Cartan-Hadaramard theorem.
	
	\begin{obs}[Proved as \cref{prop:cartan_hadamard_with_action_models_quotient_of_universal_space}]
		\label{obs:cartan_hadamard}
		Let $M$ be a connected closed Riemannian manifold with nonpositive sectional curvature with an isometric $G$-action. For some choice of base point $x \in M$, let $\pi=\pi_1(M,x)$ and $\Gamma = \pi_1(M_{hG},x)$. Then there is a $G$-homotopy equivalence 
		\begin{equation}
			M \simeq  \pi \backslash \univspc{\finite}{\Gamma}.
		\end{equation}
	\end{obs}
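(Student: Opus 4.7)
The plan is to model $M$ as a quotient of its universal cover $\widetilde{M}$, show that $\widetilde{M}$ with its natural $\Gamma$-action satisfies the universal property of $\univspc{\finite}{\Gamma}$, and then descend along $\pi$ to obtain a $G$-equivalence.

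First I would invoke the Cartan-Hadamard theorem: since $M$ is complete and nonpositively curved, the exponential map at a chosen lift of $x$ identifies the universal cover $p\colon \widetilde{M} \to M$ diffeomorphically with $\mathbb{R}^n$. In particular $\widetilde{M}$ is contractible, so $M$ is a $K(\pi,1)$ and $\pi$ must be torsion-free. Equipped with the pullback metric, $\widetilde{M}$ is itself a complete Hadamard manifold. The Borel construction fits into a fibre sequence $M \to M_{hG} \to BG$, which on fundamental groups gives a short exact sequence $1 \to \pi \to \Gamma \to G \to 1$. Standard covering space theory lifts the isometric $G$-action on $M$ to an isometric $\Gamma$-action on $\widetilde{M}$ extending the deck transformation action of $\pi$.

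The heart of the argument is verifying that $\widetilde{M}$ with this $\Gamma$-action is a model for $\univspc{\finite}{\Gamma}$. For an infinite subgroup $H \leq \Gamma$, the composite $H \hookrightarrow \Gamma \twoheadrightarrow G$ into the finite group $G$ cannot be injective, so $H$ contains a nontrivial $\gamma \in \pi$; this $\gamma$ acts freely on $\widetilde{M}$ as a deck transformation, hence $\widetilde{M}^H \subseteq \widetilde{M}^\gamma = \emptyset$. For a finite subgroup $H \leq \Gamma$, torsion-freeness of $\pi$ forces $H \cap \pi = 1$, so $H$ embeds into $G$ and acts on $\widetilde{M}$ by isometries. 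Here the main technical input is the Bruhat-Tits (or Cartan) fixed point theorem: a centre-of-mass construction applied to a finite orbit on a Hadamard manifold produces a fixed point, so $\widetilde{M}^H$ is nonempty. It is then a totally geodesic embedded submanifold of $\widetilde{M}$, inheriting a complete nonpositively curved Riemannian metric, hence is itself a Hadamard manifold and therefore contractible.

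To conclude I would descend along the $\pi$-quotient. Since $\pi$ is torsion-free and normal in $\Gamma$, it acts freely on $\widetilde{M}$ with quotient $M$, and the residual $G = \Gamma/\pi$ action on $\pi \backslash \widetilde{M}$ agrees with the original $G$-action on $M$. The same reasoning shows $\pi$ acts freely on any model of $\univspc{\finite}{\Gamma}$, so the $\Gamma$-equivalence between $\widetilde{M}$ and $\univspc{\finite}{\Gamma}$ descends to a $G$-equivalence of $\pi$-quotients, yielding the desired $G$-homotopy equivalence $M \simeq \pi \backslash \univspc{\finite}{\Gamma}$. The hard part is the nonemptiness assertion for $\widetilde{M}^H$, which genuinely uses the metric structure via Bruhat-Tits; the rest of the verification is elementary covering space theory together with the standard fact that fixed sets of isometric actions on Riemannian manifolds are totally geodesic.
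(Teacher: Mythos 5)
Your proposal is correct and follows essentially the same route as the paper: pass to the universal cover, use Cartan--Hadamard for contractibility, verify the universal property of $\univspc{\finite}{\Gamma}$ via Cartan's fixed point theorem (finite subgroups) and freeness of deck transformations (infinite subgroups), then descend along the free $\pi$-action. The only cosmetic difference is that the paper constructs the acting group explicitly as the group of lifts of the $G$-action to $\widetilde{M}$ and then identifies its classifying space with $M_{hG}$, whereas you start from $\Gamma = \pi_1(M_{hG})$ and invoke covering space theory to produce the action; these are the same argument run in opposite directions.
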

	
	In \cref{obs:cartan_hadamard}, we note that the map $M \rightarrow M_{hG}$ lets us view $\pi$ as a normal subgroup of $\Gamma$ with $G = \Gamma / \pi$, so we can view the quotient $\pi \backslash \univspc{\finite}{\Gamma}$ as a $G$-space with its residual action.
	Using \cref{obs:cartan_hadamard}, we can deduce \cref{thm:actions_non_nonpositively_curved_manifolds_are_borel} from the following purely homotopy theoretic statement.
	
	\begin{thm}
	\label{thm:universal_spaces_and_their_borel_quotients}
		Let $\Gamma$ be a discrete group, let $\pi \leq \Gamma$ be a normal subgroup, and $\family$ the family of subgroups $K \subset \Gamma$ such that $K \leq \Gamma \rightarrow \Gamma/\pi$ is injective.  Then, for each $H \leq G$, the map
		\[ (\pi \backslash \univspc{\family}{\Gamma})^{H} \rightarrow (\pi \backslash \univspc{\family}{\Gamma})^{hH}   \]
		is an equivalence.
	\end{thm}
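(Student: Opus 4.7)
The plan is to compute both $(\pi \backslash \univspc{\family}{\Gamma})^H$ and $(\pi \backslash \univspc{\family}{\Gamma})^{hH}$ explicitly, showing that each is naturally equivalent to the space of sections of the restricted extension $1 \to \pi \to \tilde H \to H \to 1$ up to $\pi$-conjugation, where $\tilde H \leq \Gamma$ is the preimage of $H$. Write $Y := \pi \backslash \univspc{\family}{\Gamma}$. To begin, I would observe that $\{e\} \in \family$ so that the underlying space of $\univspc{\family}{\Gamma}$ is contractible, and since every isotropy subgroup of the $\Gamma$-action lies in $\family$ and therefore meets $\pi$ trivially, the $\pi$-action on $\univspc{\family}{\Gamma}$ is free. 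Hence $Y \simeq B\pi$, with a $G$-action induced by the extension $1 \to \pi \to \Gamma \to G \to 1$.

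For the homotopy fixed points, I would apply the classical identification of $H$-homotopy fixed points of $B\pi$ (with $H$-action determined by the extension) with the space of sections of the associated classifying-space fibration:
\[
	Y^{hH} \;\simeq\; (B\pi)^{hH} \;\simeq\; \mathrm{Sect}(B\tilde H \to BH) \;\simeq\; \coprod_{[s]} B Z_\pi(s(H)),
\]
where $[s]$ ranges over $\pi$-conjugacy classes of group-theoretic sections $s : H \to \tilde H$ and $Z_\pi(s(H))$ is the centraliser of $s(H)$ in $\pi$. Writing $\mathcal L$ for the set of sections of $\tilde H \twoheadrightarrow H$ with its $\pi$-conjugation action, this identifies $Y^{hH}$ with the homotopy orbit $\mathcal L_{h\pi}$.

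For the genuine fixed points, the key observation is that $[x] \in Y$ is $H$-fixed iff the subgroup $L_x := \mathrm{Stab}_\Gamma(x) \cap \tilde H$ maps isomorphically onto $H$ (equivalently, is a section of $\tilde H \twoheadrightarrow H$), and this $L_x$ is uniquely determined by $[x]$ since $\mathrm{Stab}_\Gamma(x) \in \family$. Organising the $H$-fixed orbits by their associated section yields
\[
	Y^H \;\cong\; \Bigl(\coprod_{L \in \mathcal L} (\univspc{\family}{\Gamma})^L\Bigr) \Big/ \pi,
\]
with $\pi$ acting diagonally by $p \cdot (L, x) = (pLp^{-1}, px)$. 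Each fixed-point set $(\univspc{\family}{\Gamma})^L \simeq *$ because $L \in \family$, and the $\pi$-action on the disjoint union is free (if $p x = x$ for $p \in \pi$, then $p = e$ since $\pi$ acts freely on $\univspc{\family}{\Gamma}$). Hence the quotient is homotopy equivalent to $\mathcal L_{h\pi}$, agreeing with the computation of $Y^{hH}$ above.

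It remains to verify that the canonical comparison map $Y^H \to Y^{hH}$ realises the identity on $\mathcal L_{h\pi}$ under these two presentations. I expect this to be the principal technical step: a fixed point $[x]$ with associated lift $L_x$ corresponds, in the section model of $Y^{hH}$, to the constant $H$-equivariant map $EH \to Y \simeq B\pi$ at $[x]$, and one has to confirm that this map classifies precisely the section $L_x$ via the equivalence between sections of $B\tilde H \to BH$ and sections of the group extension. Unwinding the identifications carefully — matching the higher homotopical data $B Z_\pi(s(H))$ on each component, and ruling out a twist by a non-trivial automorphism of $\mathcal L_{h\pi}$ — is the bookkeeping that completes the proof.
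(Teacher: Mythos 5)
Your route is genuinely different from the paper's, and the two computations you do carry out are correct: $\pi\backslash\univspc{\family}{\Gamma}\simeq B\pi$ with its residual $G$-action, the $H$-homotopy fixed points are the section space of $B\widetilde{H}\to BH$ and hence $\coprod_{[s]}BZ_\pi(s(H))\simeq\mathcal{L}_{h\pi}$, and the genuine $H$-fixed points decompose as $\bigl(\coprod_{L\in\mathcal{L}}(\univspc{\family}{\Gamma})^{L}\bigr)/\pi\simeq\mathcal{L}_{h\pi}$ via the stabiliser--section dictionary (note that your implicit identification of the stabiliser of a section-subgroup $L$ with $Z_\pi(L)$ rather than $N_\pi(L)$ is fine, since $pLp^{-1}=L$ forces $plp^{-1}=l$ by injectivity of $L\to H$). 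The paper avoids all of this: it proves an equivalence of categories $\spc_\Gamma^{\family}\simeq(\spc_G)_{/\Bor(B\pi)}$ implemented by $X\mapsto\pi\backslash X$, observes that the terminal object $\univspc{\family}{\Gamma}$ must therefore go to the terminal object $\Bor(B\pi)$, and concludes because any $G$-space in the image of the right adjoint $\Bor\colon\spc^{BG}\to\spc_G$ satisfies $X^{H}\xrightarrow{\ \sim\ }X^{hH}$ tautologically, with the correct comparison map. The categorical argument buys exactly the step your write-up defers; your argument, once complete, buys an explicit description of the fixed-point components that the paper never makes visible.

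The deferred step is, however, a genuine gap and not mere bookkeeping: producing equivalences $Y^{H}\simeq\mathcal{L}_{h\pi}\simeq Y^{hH}$ separately does not show that the canonical map $Y^{H}\to Y^{hH}$ is an equivalence, and this map is the entire content of the theorem. To close it you should argue componentwise rather than trying to rule out automorphisms of $\mathcal{L}_{h\pi}$ globally. On $\pi_0$: a fixed orbit $[x]$ with associated lift $L=L_x$ determines the $H$-map $*\to Y$ at $[x]$, whose adjoint section of $Y_{hH}\simeq(\univspc{\family}{\Gamma})_{h\widetilde{H}}\simeq B\widetilde{H}\to BH$ factors as $BH\cong BL\to B\widetilde{H}$ through $\{x\}_{hL}$, so it classifies exactly the splitting $H\cong L\hookrightarrow\widetilde{H}$; hence $\pi_0(Y^H)\to\pi_0(Y^{hH})$ is the identity of $\mathcal{L}/\pi$. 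On the component of $[x]$: both sides are $BZ_\pi(L)$, both maps from the point $[x]$ are $\pi_1$-isomorphisms onto the same subgroup $Z_\pi(L)\leq\pi=\pi_1(Y)$ (on the fixed-point side because $Z_\pi(L)=\mathrm{Stab}_\pi(L,x)$ acts freely on the contractible space $(\univspc{\family}{\Gamma})^{L}$, on the homotopy-fixed-point side by the mapping-space computation), and the triangle over $Y$ commutes, so the induced map of components is an equivalence. Until an argument of this shape is written down, the proof is incomplete.
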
	

    \begin{proof}[Proof of \cref{thm:actions_non_nonpositively_curved_manifolds_are_borel} assuming \cref{thm:universal_spaces_and_their_borel_quotients} and \cref{obs:cartan_hadamard}]
        We apply \cref{thm:universal_spaces_and_their_borel_quotients} for the family of finite subgroups of the group $\Gamma$ from \cref{obs:cartan_hadamard}. Since $\pi_1(M)$ is torsion free \cite[Cor. 12.18]{Lee}, the finite subgroups of $\Gamma$ are precisely those $F \subset \Gamma$ such that $F \subset \Gamma \rightarrow \Gamma/\pi_1(M) \simeq G$ is injective. So indeed, $M \simeq \pi_1(M) \backslash \univspc{\finite}{\Gamma}$ is a Borel $G$-space.
    \end{proof}

	\subsection*{Relevance for the Nielsen realisation problem}
	
	The Nielsen realisation problem for aspherical manifolds\footnote{For the purposes of this article, we follow the convention that an aspherical space is a connected space with contractible universal cover, and an aspherical
	manifold is a closed connected manifold with contractible universal cover.} asks if homotopical group actions on aspherical manifolds, in a sense to be made precise below, can be rigidified to actual group actions. 
	Nielsen originally asked if for a finite subgroup $G$ of the mapping class group $\mcg{\Sigma}$ of a closed oriented surface $\Sigma$, each element of $G$ admits a representative homeomorphism, which jointly assemble to an actual $G$-action on $M$. A positive answer was ultimately given by Kerckhoff \cite{Kerckhoff}. The map $\mcg{\Sigma} \rightarrow \pi_0 \hAut(\Sigma)$ to the group of homotopy classes of homotopy automorphisms is an isomorphism, so one may indeed view this as a way of rigidifying a homotopical group action.

    The latter perspective can be generalised to general aspherical manifolds - can a group homomorphism $G \rightarrow \pi_0 \hAut(M)$ be rigidified to an actual group action?  This question has received quite a bit of attention in the literature \cite{dl5, Weinberger_variations}. If the centre of $\pi_1(M)$ is not trivial, its formulation is not yet quite optimal, and we recall some homotopy theory of aspherical spaces to give a slightly better formulation.
    For a general aspherical space $X$, its space of homotopy automorphisms $\hAut(X)$ has the homotopy groups
	\begin{equation}
		\pi_i\hAut(X) = \begin{cases}
			\Out{\pi_1(X)}  &\colon\text{for $i = 0$;}\\
			C(\pi_1(X))  &\colon\text{for $i=1$;}\\
			0  &\colon\text{for $i\geq 2$.}
		\end{cases}
	\end{equation}
	Here $\Out{\pi_1(M)}$ is the group of \textit{outer automorphisms} of $\pi_1(M)$, the quotient $\Aut{\pi_1(M)}/G$, where $G$ acts on itself by conjugation, and $C(\pi_1(X))$ is the centre of $\pi_1(X)$.	
	
	In \cite{RaymondScott}, Raymond-Scott observe that if $G \rightarrow \pi_0 \hAut(M)$ lifts to a group action on $M$, one can in particular construct the fibre sequence of aspherical spaces\footnote{They construct a short exact sequence of groups, which amounts to the same datum under the Bar construction $B(-)$.} $M \rightarrow M_{hG} \rightarrow BG$. 
    The outer automorphism of $\pi_1(M)$ induced by fibre transport along $g \in \pi_1(BG) = G$ is induced by conjugating with any lift of $g$ in $\pi_1(M_{hG})$ and noting that this automorphism preserves the subgroup $\pi_1(M) \leq \pi_1(M_{hG})$.
	
	By the straightening-unstraightening equivalence, constructing such a fibre sequence is the same as finding a lift in the following diagram of $E_1$-groups in the category of spaces.
	 \begin{equation}
	 	\begin{tikzcd}
	 		& \hAut(M) \ar[d] \\
	 		G \ar[r] \ar[ur, dashed] & \pi_0 \hAut(M)
	 	\end{tikzcd}
	 \end{equation}
	 Raymond-Scott observe that if $C(\pi_1(M))$ is nontrivial, such a lift need not exist, and in fact give explicit aspherical manifolds for which it does not. It seems to make sense to view Raymond-Scott's objection as a reason to change the formulation of the Nielsen realisation problem to the following.
	 
	 \begin{question}[The Nielsen realisation problem]
        \label{quest:nielsen}
	 	Let $G$ be a finite group, $M$ an aspherical manifold and consider a map of $E_1$-groups $G \rightarrow \hAut(M)$. Does it refine to a $G$-action on $M$?\footnote{To avoid confusion, we remark that this is not the same as finding a lift along the map of $E_1$-groups $\mathrm{Homeo}(M) \rightarrow \hAut(M)$, unless $\mathrm{Homeo}(M) $ is equipped with the \textit{discrete} topology.}
	 \end{question}
	 
	 For brevity, we will refer to a map of $E_1$-groups $G \rightarrow \hAut(M)$ as a \textit{homotopical $G$-action}.
	 There are numerous known cases in which the Nielsen realisation problem holds true, and we review two of them with connection to negative curvature.
	 \begin{enumerate}
	 	\item If $M = \Sigma_g$ is an oriented surface of genus $g \geq 2$, then there exists a hyperbolic metric on $M$ so that the homotopical action by $G$ refines to an action by isometries \cite{Kerckhoff}.
	 	\item If $M$ is a closed hyperbolic manifold of dimension $d \geq 3$, then the isometry group of $M$ is a finite group and the map $\mathrm{Isom}(M) \rightarrow \hAut(M)$ is an equivalence, by Mostow rigidity.
	 \end{enumerate}
	 Note that in both cases, \cref{thm:actions_non_nonpositively_curved_manifolds_are_borel} applies. This is good news, since it means that for each subgroup $H \leq G$, the homotopy type of the fixed point space $M^H$ is aleady determined by the map of $E_1$-groups $G \rightarrow \hAut(M)$, since they are equivalent to the homotopy fixed points $M^{hH}$.
	 This makes it reasonable to ask the following variant of the Nielsen realisation problem.
	 
	 \begin{question}[Borel version of the Nielsen realisation problem]
	 \label{quest:borel_version_of_nielsen}
	 	Let $G$	be a finite group, $M$ an aspherical manifold and $G \rightarrow \hAut(M)$ a map of $E_1$-groups. Does it refine to a Borel $G$-action on $M$?
	 \end{question}
	 
    Essentially since the spaces $M^{hH}$ are determined by the homotopical group action of $G$ on $M$, we note that this formulation of the Nielsen realisation problem is about the existence of a group action giving rise to a specific $G$-homotopy type, the \textit{Borelification} $\mathrm{Bor}(M)$. Here the Borelification of a space $X$ with a homotopical $G$-action is the $G$-space $\mathrm{Bor}(X)$ with $X^H = X^{hH}$, which is constructed using the right adjoint to the forgetful functor $\spc_G \rightarrow \spc^{BG}$.  While it seems like a harder problem to construct a Borel $G$-action on $M$ as opposed to an arbitrary one, in practice it is convenient that it gives a genuine $G$-homotopy type to work with. In fact, all solutions to \cref{quest:nielsen} known to the author even solve \cref{quest:borel_version_of_nielsen}.
	Note that through \cref{thm:universal_spaces_and_their_borel_quotients}, \cref{quest:borel_version_of_nielsen} is closely related to the following question.
	
	\begin{question}[Manifold models for universal spaces]
		\label{quest:borel_version_of_nielsen_cocompact_version}
		Let $\Gamma$ be a discrete group which admits a subgroup $\pi \leq \Gamma$ of finite index, which is a Poincar\'e duality group. Is there a cocompact $\Gamma$-manifold $N$ which, as $\Gamma$-space is equivalent to $\univspc{\finite}{\Gamma}$?
	\end{question}

	To pass between \cref{quest:borel_version_of_nielsen} and \cref{quest:borel_version_of_nielsen_cocompact_version} one either passes to a $\pi$-cover or takes the quotient by the $\pi$-action. Uniqueness up to $\Gamma$-homeomorphism of manifold models for $\univspc{\finite}{\Gamma}$ is what is often referred to as the \textit{equivariant Borel conjecture}, adding a second layer of reasoning to calling \cref{quest:borel_version_of_nielsen} the Borel version of \cref{quest:nielsen}.

    We warn the reader that \cref{quest:borel_version_of_nielsen} and \cref{quest:nielsen} are known to be false in full generality \cite{BlockWeinberger}, but widely open if one focuses on groups $G$ of odd order. The mechanism by which it fails for the group $C_2$ is rather well understood and has to do with the $L$-theory of group rings of the infinite dihedral group, and under the absence of $2$-torsion this mechanism will not make an appearance, see \cite[Thm. 7.2.(2)]{dl5} for an instance of this.
	 
	 \subsection*{Relation to other work}
	 
	 The content of this note is, in one formulation or another, known to experts on the Nielsen realisation problem, although we are not aware of a written record. Having been asked questions about the relation of \cref{quest:nielsen} and \cref{quest:borel_version_of_nielsen_cocompact_version} several times, we write it to have a unifying and clarifying account of the homotopy theoretic basics of the Nielsen realisation problem, relevant to the author's ongoing collaborations on that topic \cite{pd2, isovariantpd}. 
	 
	 \subsection*{Conventions}
	 
	 We work in the setting of $\infty$-categories as developed by Joyal, Lurie and many others. The term \textit{category} will refer to a an $\infty$-category. We write $\spc$ for the category of spaces. As justified by Elmendorf's theorem, for a (discrete) group $\Gamma$ we define the category $\spc_\Gamma$ of $\Gamma$-spaces as the category of $\spc$-valued presheaves on the orbit category $\orbit(\Gamma)$.
	 
	 \subsection*{Acknowledgements}
	 
	 The author expresses gratitude to Andrea Bianchi, Emma Brink, Wolfgang L\"uck and Shmuel Weinberger for helpful conversations around this note.
	 
	 \section{Quotients of universal spaces for families}
	 
	 The purpose of this section is to prove \cref{thm:universal_spaces_and_their_borel_quotients} about the equivariant homotopy theory of spaces of the form $\pi \backslash \univspc{\family}{\Gamma}$.
	 
	\begin{defn}
		Let $\Asph \subset \spc$ denote the full subcategory of aspherical  spaces.
		Further, write $\Asphinj \hookrightarrow \Asph$ for the wide subcategory on maps that induce injections on fundamental groups for arbitrary choices of basepoints.
	\end{defn}
	
	\begin{constr}
		If $\Gamma$ is a discrete group, and $\Gamma/H \in \orbit(\Gamma)$ a transitive $\Gamma$-set, then its \textit{transport groupoid} $\abstrsp(\Gamma/H) \coloneqq (\Gamma/H)_{h\Gamma}$, defined as the homotopy orbits by the $\Gamma$-action, is an aspherical space, which identifies with $BH$. The $\Gamma$-equivariant map $\Gamma/H \rightarrow \Gamma/\Gamma$ induces a map $\abstrsp(\Gamma/H) \rightarrow \abstrsp(\Gamma/\Gamma) \simeq B\Gamma$. So, applying $\abstrsp$ upgrades to a functor
		\begin{equation}
			\label{eq:transport_to_aspherical_spaces}
			\trsp{\Gamma} \colon \orbit_\family(\Gamma) \xrightarrow{\simeq} \Asphinjfam{\family}_{/B\Gamma}, \hspace{3mm} G/H \mapsto [(\Gamma/H)_{hG} = BH \rightarrow B\Gamma].
		\end{equation}
	\end{constr}
	
	\begin{nota}
		If $\family$ is a family of subgroups of the discrete group $\Gamma$, we will write $\Asphinjfam{\family}_{/B\Gamma}$ for the full subcategory on those $f \colon X \rightarrow B\Gamma$, for which the image of $\pi_1(X,x) \subset \pi_1(B\Gamma,f(x)) \cong \Gamma$ is in $\family$, where the last equivalence is given by conjugation with some path in $B\Gamma$ from $f(x)$ to the basepoint. Note that this does not depend on the choice of base point, since families are by definition invariant under conjugation.
	\end{nota}
	
	\begin{obs}
		\label{obs:orbit_categories_and_aspherical_spaces}
		For every family $\family$ of subgroups of $\Gamma$, the functor $\trsp{\Gamma}$ from \cref{eq:transport_to_aspherical_spaces} induces an equivalence of categories $\orbit_{\family}(\Gamma) \xrightarrow{\sim} \Asphinjfam{\Gamma}_{/B\Gamma}$. This is a simple consequence of covering theory: the category $\Asphinj_{/B\Gamma}$ is by definition the category of connected coverings of $B\Gamma$, so the functor $\trsp{\Gamma}$ gives an equivalence to the category of transitive $\Gamma$-sets.
		An explicit inverse sends $p \colon X \rightarrow B\Gamma$ to the fibre over $* \in B\Gamma$. It is easy to see that this equivalence respects families as claimed.
	\end{obs}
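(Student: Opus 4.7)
The plan is to realise the claimed equivalence as a direct consequence of classical covering space theory, by first identifying $\Asphinj_{/B\Gamma}$ with the category of connected coverings of $B\Gamma$ and then invoking the standard classification of such coverings by transitive $\Gamma$-sets.

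The first step I would carry out is the identification of $\Asphinj_{/B\Gamma}$ with the category of connected covering spaces of $B\Gamma$. Given any morphism $f \colon X \rightarrow B\Gamma$ in $\Asphinj_{/B\Gamma}$, I would replace $f$ by a fibration and inspect its fibre $F$. The long exact sequence in homotopy shows that $\pi_1(F)$ is the kernel of $\pi_1(X) \rightarrow \pi_1(B\Gamma)$, which vanishes by the $\pi_1$-injectivity hypothesis; simultaneously, asphericality of $X$ and $B\Gamma$ forces $\pi_n(F) = 0$ for $n \geq 2$. Hence $F$ is a discrete set and $f$ is homotopy equivalent to a connected covering. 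Conversely, every connected covering of $B\Gamma$ has aspherical total space and is $\pi_1$-injective, giving the required full subcategory identification.

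The second step is to apply classical covering theory to identify connected covers of $B\Gamma$ with transitive $\Gamma$-sets via the fibre functor $p \mapsto p^{-1}(*)$. An inverse sends $\Gamma/H$ to the Borel construction $(\Gamma/H)_{h\Gamma} \rightarrow (\Gamma/\Gamma)_{h\Gamma} \simeq B\Gamma$, which is precisely $\trsp{\Gamma}(\Gamma/H) = [BH \rightarrow B\Gamma]$. Composing with the first step yields the equivalence $\orbit(\Gamma) \xrightarrow{\sim} \Asphinj_{/B\Gamma}$ implemented by $\trsp{\Gamma}$.

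Finally, I would verify compatibility with families. Under the fibre functor, a covering $p \colon X \rightarrow B\Gamma$ corresponds to $\Gamma/H$ where $H \leq \Gamma$ is the image of $\pi_1(X) \hookrightarrow \pi_1(B\Gamma) \cong \Gamma$, the isomorphism being fixed by a choice of path as in the definition of $\Asphinjfam{\family}_{/B\Gamma}$. Thus $X$ lies in $\Asphinjfam{\family}_{/B\Gamma}$ exactly when $H \in \family$, i.e.\ when $\Gamma/H$ lies in $\orbit_{\family}(\Gamma)$; conjugation-invariance of $\family$ makes this independent of the choices of basepoints and paths involved. The only point requiring any care at all is precisely this bookkeeping of basepoints when passing between the fibre $p^{-1}(*)$, the covering, and the Borel construction, but conjugation-invariance of $\family$ ensures that none of these choices affect membership in the relevant subcategories, so the obstacle is purely cosmetic rather than substantive.
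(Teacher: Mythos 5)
Your proposal is correct and follows essentially the same route as the paper: identify $\Asphinj_{/B\Gamma}$ with connected coverings of $B\Gamma$, invoke the classification of coverings by transitive $\Gamma$-sets via the fibre functor, and check that membership in the family is preserved using conjugation-invariance. The only difference is that you spell out (via the long exact sequence of the fibration) why a $\pi_1$-injective map of aspherical spaces is a covering, a point the paper treats as definitional.
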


	\begin{lem}
		\label{lem:extension_family_and_aspherical_spaces}
		Consider an extension of groups
		\[ 1 \rightarrow \pi \rightarrow \Gamma \rightarrow G \rightarrow 1 \]
		and write $\family$ for the family of subgroups $F \subset \Gamma$ such that the composite $F \subset \Gamma \rightarrow G$ is injective. Then the diagram
		\begin{equation*}
			\begin{tikzcd}
				\Asphinjfam{\family}_{/B\Gamma} \ar[r, "\trsp{\Gamma}"] \ar[d] & \spc_{/B\Gamma} \ar[d]\\
				\Asphinj_{/BG} \ar[r, "\trsp{G}"] & \spc_{/BG}
			\end{tikzcd}
		\end{equation*}
		where the vertical maps are induced by postcomposition with $B\Gamma \rightarrow BG$, is cartesian.
	\end{lem}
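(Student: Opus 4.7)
The plan is to identify the $\infty$-categorical pullback
\[ P \coloneqq \Asphinj_{/BG} \times_{\spc_{/BG}} \spc_{/B\Gamma} \]
explicitly as a subcategory of $\spc_{/B\Gamma}$, and then check that this subcategory coincides on the nose with $\Asphinjfam{\family}_{/B\Gamma}$. Both horizontal functors in the square are inclusions of subcategories — wide for the $\pi_1$-injectivity condition encoded in $\Asphinj$, and full for the family condition cutting out $\Asphinjfam{\family}$ — so they are monomorphisms in $\catinf$. Consequently $P$ is the subcategory of $\spc_{/B\Gamma}$ whose objects are those $f\colon X \to B\Gamma$ for which the composite $X \to BG$ lies in $\Asphinj_{/BG}$ (i.e.\ $X$ is aspherical and the induced map $\pi_1(X) \to G$ is injective), and whose morphisms are maps over $B\Gamma$ whose underlying map of spaces is $\pi_1$-injective.

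Next, I would match the two descriptions on objects. For an aspherical $X$ equipped with $f\colon X \to B\Gamma$, let $F \subset \Gamma$ denote the image of $\pi_1(X) \to \Gamma$. The composite $\pi_1(X) \to \Gamma \twoheadrightarrow G$ factors as
\[ \pi_1(X) \twoheadrightarrow F \hookrightarrow \Gamma \twoheadrightarrow G, \]
and is therefore injective if and only if both $\pi_1(X) \twoheadrightarrow F$ is an isomorphism (equivalently, $f$ is $\pi_1$-injective) and the restriction $F \to G$ is injective (equivalently, $F \in \family$). The conjunction of these two conditions is precisely the defining condition of $\Asphinjfam{\family}_{/B\Gamma}$. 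For morphisms the check is immediate: in both $P$ and $\Asphinjfam{\family}_{/B\Gamma}$, a morphism between two such objects is a map $g\colon X \to X'$ over $B\Gamma$ with $\pi_1(g)$ injective — this condition is intrinsic to $g$ and is insensitive to whether the targets are viewed over $B\Gamma$ or over $BG$. So the canonical functor $\Asphinjfam{\family}_{/B\Gamma} \to P$ is bijective on objects and morphisms, giving the desired equivalence and proving the square cartesian.

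The main obstacle is conceptual rather than technical: one must carefully unwind what the pullback of subcategories means in $\catinf$ and keep track of the interleaving of the wide ($\pi_1$-injectivity) and full (family) conditions built into $\Asphinjfam{\family}_{/B\Gamma}$. Once $P$ is described explicitly, everything else reduces to the elementary group-theoretic factorisation of $\pi_1(X) \to G$ through the image $F$.
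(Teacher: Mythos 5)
Your proposal is correct and follows essentially the same route as the paper: both arguments reduce the cartesianness of the square to the observation that for aspherical $X \to B\Gamma$, injectivity of $\pi_1(X) \to G$ is equivalent to injectivity of $\pi_1(X) \to \Gamma$ together with the image lying in $\family$, via the factorisation through the image. Your additional care with the morphism-level check (and the remark that $\pi_1$-injectivity over the base makes the subcategory inclusions full) is a slightly more explicit rendering of what the paper asserts in one line.
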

	
	\begin{proof}
		Both horizontal functors are inclusions of full subcategories. We have to show that $X \rightarrow B\Gamma \in \spc_{/B\Gamma}$ is an object of $\Asphinjfam{\family}_{/B\Gamma}$ if and only if $X \rightarrow B\Gamma \rightarrow BG$ is an object of $\Asphinj_{/BG}$. Asphericity of $X$ is of course necessary in both situations, so the claim is that $\pi_1(X) \rightarrow \pi_1(B\Gamma)$ is injective with image in $\family$ if and only if $\pi_1(X) \rightarrow \pi_1(B\Gamma) \rightarrow \pi_1(BG)$ is injective, which is exactly the definition of the family $\family$.
	\end{proof}
	
	\begin{lem}
		\label{lem:spaces_over_borelification}
		Let $X \in \spc^{BG}$ be a space with $G$-action. Then the diagram
		\begin{equation*}
			\begin{tikzcd}
				\spc^{BG}_{/X} \ar[r, "\Bor"] \ar[d] & (\spc_G)_{/\Bor(X)} \ar[d]\\
				\spc^{BG} \ar[r, "\Bor"] & \spc_G
			\end{tikzcd}
		\end{equation*}
		is cartesian.
	\end{lem}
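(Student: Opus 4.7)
My plan is to reduce the claim to the general categorical fact that for any functor $R \colon \mathcal{D} \to \mathcal{C}$ of $\infty$-categories and any object $X \in \mathcal{D}$, the induced square
\[
\begin{tikzcd}
\mathcal{D}_{/X} \ar[r] \ar[d] & \mathcal{C}_{/RX} \ar[d] \\
\mathcal{D} \ar[r, "R"] & \mathcal{C}
\end{tikzcd}
\]
is cartesian precisely when $R$ is fully faithful. To establish this general fact, I would observe that both vertical arrows are right fibrations, so the comparison from $\mathcal{D}_{/X}$ to the pullback $\mathcal{D} \times_{\mathcal{C}} \mathcal{C}_{/RX}$ is a map of right fibrations over $\mathcal{D}$. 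It therefore suffices to check equivalence fibrewise, and over a fixed $Y \in \mathcal{D}$ this comparison reduces to the map $\mapsp_{\mathcal{D}}(Y,X) \to \mapsp_{\mathcal{C}}(RY,RX)$ induced by $R$. Hence the square is cartesian if and only if $R$ is fully faithful.

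The second step is then to verify that $\Bor \colon \spc^{BG} \to \spc_G$ is fully faithful. This is equivalent to showing that the counit $U\Bor \to \id_{\spc^{BG}}$ of the adjunction $U \dashv \Bor$ is an equivalence. Under the Elmendorf equivalence $\spc_G \simeq \func(\orbit(G)\op, \spc)$, the forgetful functor $U$ is identified with restriction along the inclusion $\iota \colon BG \simeq B\Aut{G/e} \hookrightarrow \orbit(G)\op$ picking out the free orbit together with its $G$-automorphisms. This inclusion $\iota$ is fully faithful, so the counit of the resulting Kan extension adjunction $\iota^* \dashv \iota_* = \Bor$ is an equivalence.

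Applying the first step to $R = \Bor$ then produces the cartesian square asserted in the lemma. I do not anticipate a serious obstacle, as the argument is short and conceptual. The only mild technicality is justifying the fibrewise check of cartesianness for right fibrations over slice $\infty$-categories in step one, which is a standard diagrammatic fact for which I would cite a suitable reference from the $\infty$-categorical literature.
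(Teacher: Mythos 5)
Your proposal is correct and rests on the same key fact as the paper's own proof, namely that $\Bor \colon \spc^{BG} \to \spc_G$ is fully faithful (which the paper simply asserts and you additionally verify via right Kan extension along the fully faithful inclusion of the free orbit into the orbit category); the paper then concludes by matching essential images of the two fully faithful horizontal functors, while you check the comparison fibrewise as a map of right fibrations over $\spc^{BG}$ — two standard packagings of the same formal deduction. The one blemish is your side claim that the slice square is cartesian \emph{precisely} when $R$ is fully faithful: cartesianness only forces $R$ to be fully faithful on mapping spaces with target $X$, but since you only use the (correct) implication from full faithfulness to cartesianness, this does not affect the argument.
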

	
	\begin{proof}
		Since $\Bor \colon \spc^{BG} \rightarrow \spc_G$ is fully faithful, both horizontal functors are fully faithful. Now $Z \rightarrow \Bor(X)$ lies in the image of the upper horizontal map if and only if $Z$ is in the image of the lower horizontal map, establishing the claim.
	\end{proof}
	
	\begin{obs}
		\label{obs:residual_action_borel}
		In an extension of groups
		\[ 1 \rightarrow \pi \rightarrow \Gamma \rightarrow G \rightarrow 1 \]
		note that $B\pi$ attains a $G$-action. Indeed, the object $B\Gamma \rightarrow BG \in \spc_{/BG}$ describes, under straightening-unstraightening, a space with $G$-action with underlying space $\fib(B\Gamma \rightarrow BG) = BG$.
	\end{obs}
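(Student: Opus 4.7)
The approach is to apply the straightening-unstraightening equivalence. Since $BG$ is a Kan complex, every map of spaces into $BG$ is automatically a cocartesian fibration, and straightening yields an equivalence $\spc^{BG} \simeq \spc_{/BG}$; this is the standard mechanism for converting a map into $BG$ into a space with $G$-action.

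First, I would build the required object of $\spc_{/BG}$ directly from the group extension: the quotient map $\Gamma \twoheadrightarrow G$ induces a map $B\Gamma \to BG$ on classifying spaces, which I regard as an object of $\spc_{/BG}$. Straightening it produces a functor $BG \to \spc$, i.e., a space with $G$-action, whose underlying space is computed by evaluating at the basepoint $\ast \in BG$, namely $\fib(B\Gamma \to BG)$.

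Finally, to identify this fibre with $B\pi$, I would invoke the long exact sequence of homotopy groups associated with the fibre sequence $\fib(B\Gamma \to BG) \to B\Gamma \to BG$. Both $B\Gamma$ and $BG$ are classifying spaces of discrete groups, hence aspherical, so the fibre is aspherical as well, and the long exact sequence collapses to the short exact sequence $1 \to \pi_1(\fib) \to \Gamma \to G \to 1$ expressing $\pi_1(\fib) \cong \pi = \ker(\Gamma \twoheadrightarrow G)$. Thus $\fib(B\Gamma \to BG) \simeq B\pi$, equipping $B\pi$ with the promised residual $G$-action.

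Since the argument is an unpacking of straightening-unstraightening plus the classical fibre long exact sequence, I do not anticipate any serious obstacle; the only minor point of care is to check that the basepoint conventions line up so that the underlying space of the straightened $G$-space is indeed the homotopy fibre over $\ast \in BG$, rather than some shifted or twisted variant.
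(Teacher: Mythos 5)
Your proposal is correct and is essentially the same argument the paper intends: straighten $B\Gamma \to BG$ over the Kan complex $BG$ to get a $G$-action whose underlying space is the fibre over the basepoint, then identify that fibre with $B\pi$ via the long exact sequence (connectedness of the fibre following from surjectivity of $\Gamma \to G$). Note the paper's statement contains a typo, writing $\fib(B\Gamma \rightarrow BG) = BG$ where $B\pi$ is meant; your version is the correct one.
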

	
	\begin{thm}
		\label{thm:extensions_and_universal_spaces}
		Consider an extension of groups
		\[ 1 \rightarrow \pi \rightarrow \Gamma \rightarrow G \rightarrow 1 \]
		and write $\family$ for the family of subgroups $F \subset \Gamma$ such that the composite $F \subset \Gamma \rightarrow G$ is injective. Then there is an equivalence of categories
		\[ \spc_\Gamma^\family \simeq  (\spc_G)_{/\Bor(B\pi)} \]
		which sends $X \in \spc_\Gamma^\family$ to $\pi \backslash X$, where the map $\pi \backslash X \rightarrow \Bor(B\pi)$ comes from the action of $\pi$ on $X$ being free, so that we have the $G$-equivariant map $(\pi \backslash X)^e \simeq X_{h\pi} \rightarrow *_{h\pi} \simeq B\pi$.
	\end{thm}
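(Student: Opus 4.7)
My plan is to identify $\orbit_\family(\Gamma)$ with the category of elements of $\Bor(B\pi)$ regarded as a presheaf on $\orbit(G)$, and then invoke the standard equivalence $\mathcal{P}(\mathcal{C})_{/F} \simeq \mathcal{P}(\int_\mathcal{C} F)$ between a presheaf slice and presheaves on the category of elements.

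First I would paste the two preceding cartesian squares. Straightening--unstraightening gives $\spc^{BG} \simeq \spc_{/BG}$, and since by \cref{obs:residual_action_borel} the Borel construction of $B\pi$ with its residual $G$-action is $B\Gamma$, there is a compatible identification $\spc^{BG}_{/B\pi} \simeq \spc_{/B\Gamma}$. Under this, \cref{lem:spaces_over_borelification} applied to $X = B\pi$ yields a cartesian square
\begin{equation*}
	\begin{tikzcd}
		\spc_{/B\Gamma} \ar[r] \ar[d] & (\spc_G)_{/\Bor(B\pi)} \ar[d]\\
		\spc_{/BG} \ar[r] & \spc_G,
	\end{tikzcd}
\end{equation*}
which I would paste to the right of the cartesian square of \cref{lem:extension_family_and_aspherical_spaces}. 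Using \cref{obs:orbit_categories_and_aspherical_spaces} to identify the leftmost column with $\orbit_\family(\Gamma) \to \orbit(G)$, the outer rectangle becomes a cartesian square
\begin{equation*}
	\begin{tikzcd}
		\orbit_\family(\Gamma) \ar[r] \ar[d] & (\spc_G)_{/\Bor(B\pi)} \ar[d]\\
		\orbit(G) \ar[r] & \spc_G
	\end{tikzcd}
\end{equation*}
whose bottom row one checks to be the restricted Yoneda embedding $\orbit(G) \hookrightarrow \mathcal{P}(\orbit(G)) \simeq \spc_G$.

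This cartesian square exhibits $\orbit_\family(\Gamma)$ as the category of elements $\int_{\orbit(G)} \Bor(B\pi)$, so the general presheaf-slice formula gives
\[ (\spc_G)_{/\Bor(B\pi)} \simeq \mathcal{P}(\orbit(G))_{/\Bor(B\pi)} \simeq \mathcal{P}(\orbit_\family(\Gamma)) \simeq \spc_\Gamma^\family, \]
where the last identification is Elmendorf's theorem restricted to the family $\family$.

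To verify that the resulting equivalence is given by $X \mapsto \pi \backslash X$, I would use that both sides are presheaf categories and both functors preserve colimits, so it suffices to compare them on representables $\Gamma/H$ for $H \in \family$. There, the subgroup $\pi$ acts freely on $\Gamma/H$ (since $H \cap \pi = \{1\}$) with quotient $G/\bar H$, and the canonical map $G/\bar H \to \Bor(B\pi)$ classifying the $\pi$-cover $\Gamma/H \to G/\bar H$ should match, after careful tracking of the straightening identifications, the image of $\Gamma/H$ under the composite equivalence above. The main obstacle in the argument is precisely this bookkeeping through the straightening--unstraightening zig-zag to match the two descriptions of the functor on orbits; the existence of an equivalence is essentially formal from the two preceding lemmas.
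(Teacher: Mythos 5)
Your proposal is correct and follows essentially the same route as the paper: both arguments reduce the theorem to identifying $\orbit_\family(\Gamma)$ with the pullback $\orbit(G) \times_{\spc_G} (\spc_G)_{/\Bor(B\pi)}$ (the category of elements of $\Bor(B\pi)$) by pasting the cartesian squares from \cref{obs:orbit_categories_and_aspherical_spaces}, \cref{lem:extension_family_and_aspherical_spaces} and \cref{lem:spaces_over_borelification}, and then invoke the presheaf-slice formula $\presheaf(\sC)_{/X} \simeq \presheaf(\sC_{/X})$. Your extra step checking the functor on representables to confirm it is $X \mapsto \pi\backslash X$ is a reasonable addition which the paper leaves implicit.
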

	
	\begin{proof}
		Recall that if $\sC$ is a small category, and $X \in \presheaf(\sC)$ is a presheaf, then $\presheaf(\sC)_{/X} \simeq \presheaf(\sC_{/X})$, where $\sC_{/X} = \sC \times_{\presheaf(\sC)} \presheaf(\sC)_{/X}$. Applying this to $(\spc_{G})_{/\Bor(B\pi)}$ we get an equivalence
		\[ (\spc_{G})_{/\Bor(B\pi)} \simeq \presheaf(\orbit(G)  \times_{\spc_G} (\spc_{G})_{/\Bor(B\pi)} ). \]
		Since $\spc_{\Gamma}^\family \simeq \presheaf(\orbit_\family(\Gamma))$, we have reduced to provide an equivalence $\orbit_\family(\Gamma) \simeq \orbit(G)  \times_{\spc_G} (\spc_{G})_{/\Bor(B\pi)} $. 
		The claim follows from the following commuting diagram of cartesian squares.
		\begin{equation}
			\begin{tikzcd}
				\orbit_\family(\Gamma) \ar[r] \ar[d, , "\pi \backslash - "] & \Asphinjfam{\family}_{/B\Gamma} \ar[r] \ar[d] & \spc_{/B\Gamma} \simeq (\spc^{BG})_{/B\pi} \ar[d] \ar[r, "\Bor"] & (\spc_G)_{/\Bor(B\pi)} \ar[d] \\
				\orbit(G) \ar[r] & \Asphinj_{/BG} \ar[r] & \spc_{/BG} \ar[r, "\Bor"] & \spc_G
			\end{tikzcd}
		\end{equation}
		The first square is consists of equivalences in the horizontal direction as in \cref{obs:orbit_categories_and_aspherical_spaces}. The second square is cartesian as a consequence of \cref{lem:extension_family_and_aspherical_spaces}, and the last square by \cref{lem:spaces_over_borelification}.
	\end{proof}
	
	\begin{cor}
		In the situation of \cref{thm:extensions_and_universal_spaces} is a $G$-equivariant equivalence
		\[ \pi \backslash E_\family \Gamma \simeq \Bor(B\pi). \]
	\end{cor}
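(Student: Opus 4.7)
The plan is to identify the corollary as an instance of preservation of terminal objects under the equivalence of \cref{thm:extensions_and_universal_spaces}. More precisely, by the defining fixed-point formula, $\univspc{\family}{\Gamma}$ is the terminal object of the category $\spc_\Gamma^\family$ of $\Gamma$-spaces with isotropy in $\family$: any other object in $\spc_\Gamma^\family$ admits a unique map to it, because the Elmendorf-style presentation as presheaves on $\orbit_\family(\Gamma)$ makes the space whose value on each $\Gamma/H$ (with $H \in \family$) is a point into the terminal presheaf.

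Next, \cref{thm:extensions_and_universal_spaces} supplies an equivalence of categories $\spc_\Gamma^\family \simeq (\spc_G)_{/\Bor(B\pi)}$ sending $X$ to the structure map $\pi \backslash X \rightarrow \Bor(B\pi)$. Any equivalence preserves terminal objects, and the terminal object of the slice $(\spc_G)_{/\Bor(B\pi)}$ is the identity arrow $\id \colon \Bor(B\pi) \rightarrow \Bor(B\pi)$. Therefore the image of $\univspc{\family}{\Gamma}$ under $\pi \backslash -$ must be equivalent to this identity arrow, which in particular gives a $G$-equivariant equivalence
\[ \pi \backslash \univspc{\family}{\Gamma} \simeq \Bor(B\pi) \]
on total spaces.

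There is no real obstacle beyond unwinding what ``terminal object'' means on both sides of the equivalence; the substantive content has already been absorbed into \cref{thm:extensions_and_universal_spaces}. The one small check worth flagging is that the structure map $\pi \backslash \univspc{\family}{\Gamma} \to \Bor(B\pi)$ produced in the conclusion of \cref{thm:extensions_and_universal_spaces} (coming from $(\pi \backslash X)^e \simeq X_{h\pi} \to *_{h\pi} \simeq B\pi$) really is the identity up to equivalence when $X = \univspc{\family}{\Gamma}$; this follows because by terminality of $\univspc{\family}{\Gamma}$ the object $\pi\backslash\univspc{\family}{\Gamma} \to \Bor(B\pi)$ must be terminal in $(\spc_G)_{/\Bor(B\pi)}$, and the terminal object of a slice is unique up to canonical equivalence.
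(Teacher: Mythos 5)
Your proposal is correct and follows exactly the paper's argument: $\univspc{\family}{\Gamma}$ is terminal in $\spc_\Gamma^\family$, equivalences preserve terminal objects, the terminal object of $(\spc_G)_{/\Bor(B\pi)}$ is the identity on $\Bor(B\pi)$, and the equivalence is implemented by $\pi\backslash -$. Nothing further is needed.
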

	
	\begin{proof}
		The equivalence of categories in \cref{thm:extensions_and_universal_spaces} has to send the terminal object of $\spc_\Gamma^\family$, that is $E_\family \Gamma$, to the terminal object of $(\spc_G)_{/\Bor(B\pi)}$, which is clearly $\Bor(B\pi)$. But the explicit description of that equivalence shows that it is implemented by taking the quotient by the $\pi$-action, leading to the assertion.
	\end{proof}
	
	\section{Actions in nonpositive curvature}
	
	In the following we will abbreviate
	\[ \pCH = \text{complete Riemannian manifold of nonpositive sectional curvature} \]
	invoking ``pseudo Cartan-Hadamard". In the literature, a Cartan-Hadamard manifold is a $\pCH$-manifold which is additionally simply connected, hence the notation.
	
	\begin{thm}[Cartan-Hadamard]
		Let $M$ be a $\pCH$, and let $x \in M$. Then the exponential map $T_x M \rightarrow M$ is a covering map. In particular, if $M$ is additionally simply connected, it is contractible.
	\end{thm}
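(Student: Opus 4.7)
The plan is to follow the classical differential-geometric argument, whose key inputs are Hopf--Rinow, Jacobi field analysis under curvature hypotheses, and the general fact that a local isometry out of a complete Riemannian manifold is a covering map. The theorem being purely a recollection of a standard fact, I will present the outline in the order these ingredients are assembled.

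First, I would use completeness of $M$ together with the Hopf--Rinow theorem to deduce that $\exp_x$ is defined on all of $T_xM$ and is surjective onto $M$; the second assertion uses that any $y \in M$ is joined to $x$ by a minimizing geodesic, which is an image of a radial line under $\exp_x$. Next comes the heart of the argument, showing that $\exp_x$ has no critical points. This uses Jacobi field analysis: for a geodesic $\gamma$ with $\gamma(0) = x$, a Jacobi field $J$ along $\gamma$ with $J(0) = 0$ satisfies
\begin{equation*}
\tfrac{1}{2}\tfrac{d^2}{dt^2}\lvert J\rvert^2 = \lvert J'\rvert^2 - \langle R(J,\gamma')\gamma', J\rangle \geq \lvert J'\rvert^2,
\end{equation*}
because the sectional curvature is nonpositive. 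Hence $\lvert J\rvert^2$ is a convex function vanishing at $t=0$ with derivative $2\langle J, J'\rangle(0) = 0$, but with $\tfrac{d^2}{dt^2}\lvert J\rvert^2(0) = 2\lvert J'(0)\rvert^2 > 0$ unless $J\equiv 0$. Thus $J$ cannot vanish again for $t > 0$, i.e.\ $M$ has no conjugate points along any geodesic through $x$, and $\exp_x$ is a local diffeomorphism everywhere.

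At this point I would pull back the Riemannian metric of $M$ along $\exp_x$ to obtain a Riemannian metric on $T_xM$ for which $\exp_x$ is, by construction, a local isometry. The resulting metric on $T_xM$ is complete: a radial ray $t \mapsto tv$ from the origin has image $t \mapsto \exp_x(tv)$, which is a geodesic in $M$ defined for all $t \in \mathbb{R}$ by completeness of $M$; hence the radial ray itself is a geodesic in the pullback metric and is defined for all time. By the Hopf--Rinow theorem applied to $T_xM$ with the pullback metric, geodesic completeness at one point implies geodesic completeness, and then the standard fact that a local isometry from a complete connected Riemannian manifold to a connected Riemannian manifold is a covering map (proved via geodesic lifting) gives that $\exp_x$ is a covering map.

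Finally, if $M$ is simply connected, a covering map from the connected space $T_xM$ with target $M$ simply connected must be a homeomorphism (in fact a diffeomorphism), so $M$ is diffeomorphic to $T_xM \cong \mathbb{R}^n$, hence contractible. The main technical step is the no-conjugate-points argument via the Jacobi equation and convexity; the remaining steps are either general point-set/covering space theory or standard applications of Hopf--Rinow.
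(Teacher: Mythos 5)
The paper states this as a classical recollection and gives no proof of its own (the geometric inputs are simply cited from Lee's textbook), so there is nothing internal to compare against; your write-up is the standard textbook argument and it is correct. All the steps check out: the Jacobi field computation $\tfrac{1}{2}\tfrac{d^2}{dt^2}|J|^2 = |J'|^2 - \langle R(J,\gamma')\gamma',J\rangle \geq |J'|^2$ under nonpositive sectional curvature, the convexity argument ruling out conjugate points, the pullback metric on $T_xM$ being complete because radial geodesics are defined for all time, and the fact that a local isometry out of a complete connected manifold is a covering map. This is exactly the proof the paper is implicitly relying on.
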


	\begin{lem}
		Let $M$ be a $\pCH$ with an isometric action by the finite group $G$. Then $M^G$, with the restricted Riemannian metric, is a $\pCH$ as well.
	\end{lem}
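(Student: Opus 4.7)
My plan is to prove the stronger classical fact that $M^G \subset M$ is a totally geodesic embedded submanifold. All three properties the lemma requests of $M^G$ --- smoothness, an inherited Riemannian metric of nonpositive sectional curvature, and completeness --- will then follow from this by standard Riemannian geometry.

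First I would set up the submanifold structure of $M^G$, together with the identification $T_pM^G = (T_pM)^G$ at every fixed point $p$. The idea is that since $G$ is finite and acts by isometries, it acts orthogonally on $T_pM$ via the differential, and the exponential map $\exp_p \colon T_pM \rightarrow M$, which is defined on all of $T_pM$ by completeness, is $G$-equivariant. Hence near $p$ the fixed set $M^G$ coincides with $\exp_p((T_pM)^G)$, exhibiting it as the image of a linear subspace under a local diffeomorphism; this gives both the smooth embedded submanifold structure and the desired tangent space.

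Next I would verify that $M^G$ is totally geodesic. Given $p \in M^G$ and $v \in T_pM^G$, the geodesic $\gamma_v$ of $M$ with initial data $(p,v)$ satisfies $g \cdot \gamma_v = \gamma_v$ for every $g \in G$: the translate is another geodesic with the same initial conditions, since $g$ is an isometry fixing $p$ and $v$, so the two agree by uniqueness. In particular $\gamma_v$ stays inside $M^G$, so $M^G$ is totally geodesic. From here the remainder is routine: by the Gauss equation the vanishing second fundamental form gives that the intrinsic sectional curvatures of $M^G$ coincide with those of $M$ restricted to planes tangent to $M^G$, hence are nonpositive; completeness follows because each $\gamma_v$ is defined on all of $\mathbb{R}$ by completeness of $M$ and, as above, remains in $M^G$. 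The only mildly delicate point will be the linearisation in the first step needed to identify $T_pM^G$ with $(T_pM)^G$, but this is a standard fact about isometric actions of compact groups on Riemannian manifolds.
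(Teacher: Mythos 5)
Your proof is correct and follows essentially the same route as the paper, which likewise reduces everything to the fact that $M^G \subset M$ is a totally geodesic submanifold. The only (cosmetic) difference is in the completeness step: the paper observes that $M^G$ is a closed subset of the complete metric space $M$, whereas you argue via geodesic completeness and Hopf--Rinow; both work, and your write-up simply supplies more detail than the paper's one-line proof.
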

	
	\begin{proof}
		Completeness of $M^G$ follows from $M^G \subset M$ being a closed subset, using that a closed subspace of a complete metric space space is complete again. Further, $M^G \subset M$ is easily seen to be totally geodesic, which implies that the sectional curvature of $M^G$ is the restriction of the sectional curvature of $M$.
	\end{proof}
	
	\begin{prop}
		\label{prop:cartan_hadamard_with_action_models_quotient_of_universal_space}
		Let $M$ be a closed connected $\pCH$ with an isometric action by the finite group $G$. Let $\Gamma = \pi_1(M_{hG})$. Then $M \simeq \pi_1(M) \backslash \univspc{\finite}{\Gamma}$,
	\end{prop}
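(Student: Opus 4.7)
The plan is to exhibit the Riemannian universal cover $\tilde{M}$ as a genuine model for the $\Gamma$-space $\univspc{\finite}{\Gamma}$, and then recover $M$ as its $\pi$-quotient, where $\pi \coloneqq \pi_1(M)$. By the Cartan-Hadamard theorem, $\tilde{M}$ is a simply connected $\pCH$ and hence contractible. I would first upgrade the $G$-action on $M$ to an isometric $\Gamma$-action on $\tilde{M}$: each $g \in G$ lifts to an isometry $\tilde{g} \colon \tilde{M} \to \tilde{M}$, unique up to composition with a deck transformation, and the collection of all such lifts assembles into a group $\Gamma'$ fitting in an extension
\[ 1 \to \pi \to \Gamma' \to G \to 1 \]
that acts on $\tilde{M}$ by isometries. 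This extension is classified by the $G$-action on the aspherical space $M \simeq B\pi$, so it coincides with the extension $1 \to \pi \to \pi_1(M_{hG}) \to G \to 1$ arising from the fibration $M \to M_{hG} \to BG$; hence $\Gamma' \cong \Gamma$.

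Next I would verify the fixed-point behaviour of this $\Gamma$-action. For an infinite subgroup $H \leq \Gamma$, the finiteness of $G = \Gamma/\pi$ forces $H \cap \pi$ to be nontrivial, and since $\pi$ is torsion free (as the fundamental group of a closed aspherical manifold) and acts freely by deck transformations on $\tilde{M}$, we get $\tilde{M}^H \subseteq \tilde{M}^{H \cap \pi} = \emptyset$. For a finite subgroup $H \leq \Gamma$, the Cartan fixed-point theorem (circumcenter of an $H$-orbit in the CAT$(0)$ space $\tilde{M}$) produces a fixed point, and by uniqueness of geodesics in the Hadamard manifold $\tilde{M}$ the set $\tilde{M}^H$ is geodesically convex, hence contractible via geodesic contraction. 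This identifies $\tilde{M}$ with $\univspc{\finite}{\Gamma}$ as $\Gamma$-spaces, and taking $\pi$-quotients yields the desired $G$-equivariant equivalence $M = \pi \backslash \tilde{M} \simeq \pi \backslash \univspc{\finite}{\Gamma}$.

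The main obstacle will be the careful comparison of the extension $\Gamma'$ produced by lifting isometries with the extension $\Gamma = \pi_1(M_{hG})$ arising from the Borel fibration. Both encode the same classifying datum of the $G$-action on the aspherical space $M$, but writing down an explicit isomorphism requires some care with basepoints and lifts; everything else reduces to standard applications of the Cartan-Hadamard and Cartan fixed-point theorems together with covering theory.
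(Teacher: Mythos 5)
Your proposal is correct and follows essentially the same route as the paper: pass to the universal cover $\widetilde{M}$, let the group of all lifts of elements of $G$ act on it, check via Cartan's fixed point theorem and geodesic convexity that finite subgroups have contractible fixed points while infinite subgroups meet the deck group nontrivially and hence act without fixed points, and then take the $\pi_1(M)$-quotient. The identification of your $\Gamma'$ with $\Gamma=\pi_1(M_{hG})$, which you flag as the main obstacle, is handled in the paper in one line using contractibility of $\widetilde{M}$: one has $B\Gamma' \simeq \widetilde{M}_{h\Gamma'} \simeq M_{hG}$, so no delicate comparison of extension classes is needed.
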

	
	\begin{proof}
		Let $\widetilde{M}$ denote the universal cover of $M$. Let $\widetilde{\Gamma}$ denote the group of all homeomorphisms  $h$ of $\widetilde{M}$ for which there is a $g \in G$ and a commuting diagram as follows.
		\begin{equation}
			\begin{tikzcd}
				\widetilde{M} \ar[r, "h"] \ar[d] & \widetilde{M} \ar[d]\\
				M \ar[r, "g"] & M 
			\end{tikzcd}
		\end{equation}
		In particular, we get an inclusion $\pi_1(M) \hookrightarrow \widetilde{\Gamma}$ as the deck transformation group, and $G = \widetilde{\Gamma} / \pi_1(M)$. The residual $G$-action on $M = \pi_1(M) \backslash \widetilde{M}$ is exactly the $G$-action we started with. Hence, since $\widetilde{M}$ is contractible
		\[ B\widetilde{\Gamma} \simeq \widetilde{M}_{h\widetilde{\Gamma}} \simeq M_{hG} \]
		giving an identification $\widetilde{\Gamma} \simeq \Gamma$. Now $\widetilde{M}$ is a simply connected $\pCH$ with an isometric action by the group $\widetilde{\Gamma}$. In particular every finite subgroup of $\widetilde{\Gamma}$ has contractible fixed points: the fixed point set by a finite subgroup is geodesically convex, and nonempty by Cartan's fixed point theorem \cite[Thm. 12.17]{Lee}. Let us now argue that every infinite subgroup $H \leq \widetilde{\Gamma}$ contains a nontrivial element of the deck transformation group, so that it in particular has no fixed points. This follows since the intersection of $H$ with the kernel of $\widetilde{\Gamma} \rightarrow G$ is still infinite, in particular contains a nontrivial element of that kernel, which is precisely the deck transformation group. Hence $\widetilde{M} \simeq \univspc{\finite}{\widetilde{\Gamma}}$, and by construction of $\widetilde{\Gamma}$ there is a $G$-equivariant equivalence $M \simeq \pi_1(M) \backslash \widetilde{M}$.
	\end{proof}

    \begin{rmk}
        It seems likely that the results presented in this section also hold for other nonpositively curved metric spaces, which also enjoy a Cartan-Hadamard theorem \cite{Bridson_Haefliger}.
    \end{rmk}
	
	\printbibliography
	
\end{document}